\documentclass[12pt,draft]{amsart}

\usepackage{color,enumerate}
%\usepackage{refcheck}
%\usepackage[color]{showkeys}
%\definecolor{refkey}{rgb}{0.9451,0.2706,0.4941}
%\definecolor{labelkey}{rgb}{0.9451,0.2706,0.4941}

\newtheorem{theorem}{Theorem}[section]

\theoremstyle{definition}

\theoremstyle{remark}

\numberwithin{equation}{section}

\usepackage[margin=2.4cm]{geometry}

\newcommand{\R}{\mathbb{R}}

\newcommand{\loc}{\textrm{loc}}
\newcommand{\oute}{\textrm{out}}
\newcommand{\inn}{\textrm{in}}

\def\polhk#1{\setbox0=\hbox{#1}{\ooalign{\hidewidth
    \lower1.5ex\hbox{`}\hidewidth\crcr\unhbox0}}}

\begin{document}

\title[Gradient singularity]
{Moving gradient singularity\\ for the evolutionary $p$-Laplace equation}

\author[E. Lindgren]{Erik Lindgren}
\address{Department of Mathematics, KTH - Royal Institute of Technology, SE-100 44 Stockholm, Sweden. }
\email{eriklin@kth.se}

\author[J. Takahashi]{Jin Takahashi}
\address{Department of Mathematical and Computing Science, 
Tokyo Institute of Technology, 2-12-1 Ookayama, Meguro-ku, Tokyo 152-8552, Japan}
\email{takahashi@c.titech.ac.jp}

\dedicatory{Dedicated to the memory of Professor Marek Fila}

\subjclass[2020]{Primary 35K92; %Quasilinear parabolic equations with p-Laplacian
Secondary 35A01, %Existence problems for PDEs 
35A21} %Singularity in context of PDEs

\keywords{Evolutionary $p$-Laplace equation, Gradient singularity, 
Comparison functions}

\begin{abstract}
We consider the evolutionary $p$-Laplace equation in $\R^n$. 
For $p>n$, we construct a solution $u$ with 
a moving gradient singularity in the sense that 
$|\nabla u(x,t)|\to \infty$ for each $t$ as $x\to\xi(t)$, 
where $\xi:[0,\infty)\to\R^n$ is a given curve. 
\end{abstract}

\maketitle

%%%%%%%%%%%%%%%%%%%%%%%%%%%%%%%%%%%%%%%
%%%%%%%%%%%%%%%%%%%%%%%%%%%%%%%%%%%%%%%
%%%%%%%%%%%%%%%%%%%%%%%%%%%%%%%%%%%%%%%
\section{Introduction}
%%%%%%%%%%%%%%%%%%%%%%%%%%%%%%%%%%%%%%%
%%%%%%%%%%%%%%%%%%%%%%%%%%%%%%%%%%%%%%%
%%%%%%%%%%%%%%%%%%%%%%%%%%%%%%%%%%%%%%%
This paper is concerned with the evolutionary $p$-Laplace equation 
\begin{equation}
\label{eq:evop}
	u_t = \Delta_p u 
\end{equation}
in $\R^n$, where $n\geq1$, $p>1$ 
and $\Delta_p u := \nabla \cdot (|\nabla u|^{p-2} \nabla u)$ is the $p$-Laplacian.

The $p$-Laplace equation, $\Delta_p u =0$, arises as the Euler-Lagrange equation of the functional
\[
u\mapsto \int  |\nabla u|^p\,dx.
\] 
See \cite{Lin06} for an introduction and an overview of the theory for the $p$-Laplace equation.

Equation \eqref{eq:evop}, the evolutionary $p$-Laplace equation, is the corresponding gradient flow. For its regularity theory, see for instance \cite{DiB}.
The fundamental solution for the $p$-Laplace equation is
$(n-p)|x|^{-(n-p)/(p-1)}$ for $p\neq n$. 
In the case $p>n$, this can be regarded as 
a solution of \eqref{eq:evop} with a standing gradient singularity at $x=0$ for each $t$. 
Then, a simple question arises as to whether 
there exists a solution with a moving gradient singularity. 
Here, by a solution with a moving gradient singularity, 
we mean a locally bounded solution $u$ satisfying 
\[
	|\nabla u(x,t)| \to \infty 
	\quad \mbox{ for each }t \mbox{ as }x\to \xi(t), 
\]
where $\xi:[0,\infty)\to \R^n$ is a given curve. 
In this paper, we give an example of 
such a singular solution for the evolutionary $p$-Laplace equation with $p>n$.

Solutions with a moving singularity, 
in the sense that 
$|u(x,t)| \to \infty$ for each $t$ as $x\to \xi(t)$,  
were studied for various kinds of nonlinear parabolic equations, 
see \cite{KT17,SY09,Ta21} for the semilinear heat equation, 
\cite{FMTY22,FTY19} for the porous medium and the fast diffusion equation, 
\cite{KZ15,KUW24} for the Navier-Stokes equations and the references given there.  It is also worth mentioning \cite{KL05} and \cite{KLP16}, where supersolutions of \eqref{eq:evop} are studied in detail. However, to our best knowledge, 
solutions with a moving gradient singularity have not been constructed yet.

To construct a solution with a moving gradient singularity, 
we consider the following initial value problem: 
\begin{equation} \label{eq:main}
	\left\{ 
	\begin{aligned}
		&u_t=\Delta_p u,  
		&&x \in \R^n \setminus \{ \xi(t) \}, \;  t>0, \\
		&u(x,0) = u_0(x),  
		&&x\in  \R^n\setminus \{ \xi(0)\}.  \\
	\end{aligned}
	\right.
\end{equation}
Our main result is as follows.

\begin{theorem}\label{th:main}
Let $n\geq2$, $p>n$ and $k>k'>0$. Fix $\lambda$ and $\lambda'$ such that 
\begin{equation}\label{eq:lamcon}
	0<\lambda < \frac{p-n}{p-1} < \lambda'<1. 
\end{equation}
Then, there exist $0<C_\xi<1$ and $A>1$ such that 
the following statement holds: 
Assume that $\xi \in C^1([0,\infty); \R^n)$ and 
$u_0\in C(\R^n) \cap C^1(\R^n\setminus\{\xi(0)\})$ satisfy 
\begin{align}
	&|\xi'(t)| \leq C_\xi (1+At)^{-1} \quad \mbox{ for any }t\geq0,  \label{eq:assum} \\
	&\begin{aligned}
	&k' |x-\xi(0)|^{\lambda'}  (1+|x-\xi(0)|)^{\lambda-\lambda'} \\
	& \leq u_0(x) \leq k |x-\xi(0)|^\lambda (1+|x-\xi(0)|)^{\lambda'-\lambda} 
	\quad \mbox{ for any }x\in\R^n,  \end{aligned} \label{eq:u0as} 
\end{align}
respectively. 
Then, \eqref{eq:main} admits a nonnegative 
weak solution $u$ satisfying (i), (ii) and (iii).  
\begin{enumerate}[(i)]
\item
$u\in C(\R^n\times[0,\infty)) \cap 
C^1(\{ (x,t); x\in \R^n\setminus\{\xi(t)\}, t\in (0,\infty) \} )$. 
\item
For each $0<T<\infty$, there exist $0<r_T<1$  and $C_T>1$ 
depending only on $n$, $p$, $\lambda$, $\lambda'$, $A$, $C_\xi$ and $T$ such that 
\[
	C_T^{-1} |x-\xi(t)|^{\lambda'} \leq u(x,t) \leq C_T |x-\xi(t)|^\lambda, 
	\quad 0\leq |x-\xi(t)| \leq r_T, \; 0\leq t\leq  T. 
\]
\item
For each $t\in[0,\infty)$, 
\[
	|\nabla u(x,t)|\to \infty \mbox{ as }x\to\xi(t). 
\]
\end{enumerate}
\end{theorem}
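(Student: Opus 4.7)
I would squeeze $u$ between two explicit radial barriers built out of the two profiles of \eqref{eq:u0as} and produce the solution by approximation on an exterior domain, reading off (i)--(iii) from the barriers and the interior regularity theory of \cite{DiB}. Writing $r:=|x-\xi(t)|$ and fixing a small parameter $\beta\in(0,1)$, I would test the candidates
\[
\bar u(x,t):=k(1+At)^\beta\, r^\lambda(1+r)^{\lambda'-\lambda},\qquad
\underline u(x,t):=k'(1+At)^{-\beta}\, r^{\lambda'}(1+r)^{\lambda-\lambda'}.
\]
The key identity
\[
\Delta_p(r^\alpha)=\alpha^{p-1}\bigl(n+\alpha(p-1)-p\bigr)\,r^{\alpha(p-1)-p}
\]
has its sign changing at $\alpha=(p-n)/(p-1)$, so \eqref{eq:lamcon} makes $\Delta_p$ of the spatial profile of $\bar u$ negative near $\xi(t)$ and that of $\underline u$ positive there. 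Verifying $\bar u_t\ge\Delta_p\bar u$ decomposes into two regimes: \emph{near} $\xi(t)$, the negative contribution $-\Delta_p\bar u\sim r^{\lambda(p-1)-p}$ must absorb the drift $|\xi'(t)|\,r^{\lambda-1}$ coming from $\partial_t r=-\hat x\cdot\xi'(t)$, which is feasible since $\lambda(p-2)-(p-1)<0$ once $C_\xi$ is small; \emph{far} from $\xi(t)$, the Laplacian changes sign to $+r^{\lambda'(p-1)-p}$ and must be absorbed by the time term $\beta A(1+At)^{\beta-1}r^{\lambda'}$, which is feasible since $\lambda'(p-2)-p<0$ once $A$ is large. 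The subsolution property of $\underline u$ is dual, and \eqref{eq:u0as} gives $\underline u\le u_0\le\bar u$ at $t=0$.

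\textbf{Step 2 (Approximation).} For $\eps>0$ set $Q_\eps:=\{(x,t)\in\R^n\times(0,\infty):r>\eps\}$. I would solve \eqref{eq:evop} on $Q_\eps$ with Dirichlet datum $\bar u$ on $\{r=\eps\}$ and initial datum $u_0|_{\{r>\eps\}}$, obtaining a weak solution $u_\eps$ from the standard theory. By construction $\underline u\le u_\eps\le\bar u$ on the parabolic boundary, and the comparison principle extends this to all of $Q_\eps$. Passing $\eps\downarrow 0$ along a diagonal subsequence, the uniform $L^\infty_\loc$-bound together with interior $C^{1,\alpha}_\loc$-estimates yield equicontinuity of $u_\eps$ and $\nabla u_\eps$ on compact subsets of $\{x\ne\xi(t)\}$, producing a limit $u\in C^1(\{x\ne\xi(t)\}\times(0,\infty))$ which is a weak solution of \eqref{eq:evop} off the singular curve and satisfies $\underline u\le u\le\bar u$ everywhere.

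\textbf{Step 3 (Conclusions).} Part (ii) is a direct specialization of the barriers to $\{r\le r_T\}$, absorbing the time prefactor and the bounded factor $(1+r)^{\lambda'-\lambda}$ into $C_T$. Part (i) follows from the interior $C^1$-theory off the singular curve and from $\bar u(\xi(t),t)=0$ combined with $0\le u\le\bar u$, which forces continuity across the curve. For (iii), the lower bound $u(x,t)\gtrsim r^{\lambda'}$ together with $u(\xi(t),t)=0$ and the mean value theorem on a segment from $\xi(t)$ to $x$ produce, on that segment, points where $|\nabla u|\gtrsim r^{\lambda'-1}$; upgrading this to the pointwise blow-up as $x\to\xi(t)$ requires a nondegeneracy argument, for instance a rescaling $v_n(y,s):=r_n^{-\lambda'}u(\xi(t_0)+r_n y,\,t_0+r_n^p s)$ around a hypothetical sequence $x_n\to\xi(t_0)$ of bounded gradient, which should converge to a nontrivial $p$-caloric function on $\R^n\times\R$ whose gradient must be unbounded at the origin by the two-sided estimate of (ii).

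\textbf{Main obstacle.} Step 1 is the crux: the sub/supersolution inequalities couple the behavior near the moving singularity (where $|\xi'|$ must be small) with the behavior at infinity (where the time prefactor must beat the Laplacian), and balancing these regimes is what pins down the precise decay \eqref{eq:assum} of $\xi'$. A secondary difficulty is the pointwise gradient blow-up in (iii), whose rescaling argument relies on Liouville-type information about $p$-caloric functions that does not come for free.
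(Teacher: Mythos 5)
Your overall strategy --- pin $u$ between two explicit radial barriers modeled on the two profiles in \eqref{eq:u0as}, solve on exterior approximating domains, and pass to the limit using interior regularity --- is indeed the paper's approach (the paper works in the moving frame $v(y,t)=u(y+\xi(t),t)$ and uses annuli $B_{i+1}\setminus B_{1/(i+1)}$ rather than your time-dependent domains $\{|x-\xi(t)|>\eps\}$, but this is a cosmetic difference). The gap is in Step~1: the barriers you propose do not satisfy the differential inequality for all $t\ge 0$, and fixing this is the entire technical content of the paper. With $\bar u=k(1+At)^\beta\phi(r)$ and $\phi(r)=r^\lambda(1+r)^{\lambda'-\lambda}$, the $(p-1)$-homogeneity of $\Delta_p$ gives $\Delta_p\bar u\sim k^{p-1}(1+At)^{\beta(p-1)}\Delta_p\phi$, while $\partial_t\bar u\sim k\beta A(1+At)^{\beta-1}\phi$. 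In the outer region where $\Delta_p\phi>0$ (because $\lambda'>\frac{p-n}{p-1}$), the supersolution inequality becomes
\[
\beta A\,k^{2-p}\ \gtrsim\ (1+At)^{\beta(p-2)+1}\,\frac{\Delta_p\phi(r)}{\phi(r)}\,,
\]
and since $\beta(p-2)+1>0$ for every $\beta>0$ and $p>n\geq 2$, the right-hand side grows without bound in $t$ at any fixed $r$: no choice of $\beta\in(0,1)$ and $A>1$ makes $\bar u$ a global-in-time supersolution. (Your observation that ``$\lambda'(p-2)-p<0$'' controls the $r$-dependence is correct but does not touch this temporal failure.) Dually, the subsolution inequality in the region where $\Delta_p\psi<0$ forces $\beta\ge 1/(p-2)$, whereas you take $\beta$ small; so your $\underline u$ fails as well.

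The paper circumvents exactly this scaling mismatch by abandoning a global multiplicative prefactor in favor of a matched, piecewise barrier. For the supersolution, the inner piece is $K(1+At)^{\theta(\lambda'-\lambda)}r^\lambda$ and the outer piece is the \emph{spatially translated} pure power $K(1+R)^{-\lambda'}\bigl(r+R(1+At)^\theta\bigr)^{\lambda'}$: translating rather than rescaling produces a $\partial_t$ contribution of the same homogeneity as $\Delta_p$ precisely when $\theta=1/\bigl(p-1-(p-2)\lambda'\bigr)$, which is the exponent in \eqref{eq:Rtdef}. The two pieces are glued at $\rho(t)=(1+At)^\theta$ with a concave kink (the matching condition of Step~2) so the glued function is still a weak supersolution. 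The subsolution uses the critical prefactor $(1+At)^{-1/(p-2)}$ (the borderline your computation excludes), a fixed gluing radius $\sigma$, and a convex kink. Without this matched-profile structure and the specific exponent $\theta$, the two regimes you identify cannot be made globally compatible. As a minor point, you are right to flag that the lower barrier alone gives $\limsup_{x\to\xi(t)}|\nabla u(x,t)|=\infty$ rather than the full limit in (iii); the paper's one-line justification is terse on this, and your rescaling idea is one possible remedy, though it imports Liouville-type information the paper does not invoke.
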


The rest of this paper is devoted to the proof of Theorem \ref{th:main} 
and is organized as follows. 
In Section \ref{sec:comp}, we construct suitable 
super- and subsolutions. 
In Section \ref{sec:comp}, we show the existence of a 
solution with a moving gradient singularity 
based on a monotone iteration scheme.

%%%%%%%%%%%%%%%%%%%%%%%%%%%%%%%%%%%%%%%
%%%%%%%%%%%%%%%%%%%%%%%%%%%%%%%%%%%%%%%
%%%%%%%%%%%%%%%%%%%%%%%%%%%%%%%%%%%%%%%
\section{Comparison functions}\label{sec:comp}
%%%%%%%%%%%%%%%%%%%%%%%%%%%%%%%%%%%%%%%
%%%%%%%%%%%%%%%%%%%%%%%%%%%%%%%%%%%%%%%
%%%%%%%%%%%%%%%%%%%%%%%%%%%%%%%%%%%%%%%
Let $n\geq2$ and $p>n$. 
Set $v(y,t):=u(y+\xi(t),t)$. 
We consider 
\begin{equation}\label{eq:prad}
\left\{ 
\begin{aligned}
	&\partial_t v = \Delta_p v  + \xi'(t) \cdot \nabla v,  
	&&y\in \R^n\setminus \{0\}, \; t>0, \\
	&v_0(y) = u_0(y+\xi(0)),  
	&&y\in  \R^n\setminus \{0\}, 
\end{aligned}
\right.
\end{equation}
where $u_0$ satisfies \eqref{eq:u0as} with 
$k$, $k'$, $\lambda$ and $\lambda'$ chosen as in Theorem \ref{th:main}. 
In this section, we construct radial super- and subsolutions 
$v^+$ and $v^-$ of \eqref{eq:prad} satisfying 
\begin{equation}\label{eq:order}
	v^-(|y|,t) \leq 
	k' |y|^{\lambda'}  (1+|y|)^{\lambda-\lambda'}
	\leq 
	k |y|^\lambda (1+|y|)^{\lambda'-\lambda}
	\leq v^+(|y|,t) 
\end{equation}
for $y\in\R^n$ and $t \geq 0$. 
Roughly speaking, $v^+$ and $v^-$ behave like 
\[
	v^-(|y|,t) = f(t) |y|^{\lambda'},\quad 
	v^+(|y|,t) = g(t) |y|^\lambda 
	\quad 
	\mbox{ near }y=0 \mbox{ for any }t\geq 0
\]
with some functions $f$ and $g$.

%%%%%%%%%%%%%%%%%%%%%%%%%%%%%%%%%%%%%%%
%%%%%%%%%%%%%%%%%%%%%%%%%%%%%%%%%%%%%%%
%%%%%%%%%%%%%%%%%%%%%%%%%%%%%%%%%%%%%%%
\subsection{Supersolution}
%%%%%%%%%%%%%%%%%%%%%%%%%%%%%%%%%%%%%%%
%%%%%%%%%%%%%%%%%%%%%%%%%%%%%%%%%%%%%%%
%%%%%%%%%%%%%%%%%%%%%%%%%%%%%%%%%%%%%%%
We construct a supersolution. 
Let $\lambda$ and $\lambda'$ satisfy \eqref{eq:lamcon}. 
Let $K>k$ be a large constant specified later. 
We write $r:=|y|$. Set 
\[
\left\{ 
\begin{aligned}
	&v_\inn^+(r,t):= K(1+At)^{\theta(\lambda'-\lambda)} r^\lambda, \\
	&v_\oute^+(r,t):= K(1+R)^{-\lambda'} \left( r+R(1+At)^\theta \right)^{\lambda'}, 
\end{aligned}
\right.
\]
where $A>1$ is given in \eqref{eq:assum} and 
\begin{equation}\label{eq:Rtdef}
	R:=\frac{\lambda'}{\lambda}>1, \quad 
	\theta:= \frac{1}{p-1-(p-2)\lambda'}>0. 
\end{equation}
We denote by $\rho(t):=(1+At)^\theta$ an intersection point 
of $v_\inn^+$ and $v_\oute^+$. 
Define 
\[
	v^+(y,t):=
	\left\{ 
	\begin{aligned}
	&v_\inn^+(|y|,t) && \mbox{ for }0\leq |y|\leq \rho(t), \; t\geq 0,\\
	&v_\oute^+(|y|,t) && \mbox{ for }|y|> \rho(t), \; t\geq0.  
	\end{aligned}
	\right.
\]
In what follows, we write $v^+(y,t)=v^+(r,t)$ by abuse of notation.  

By the following 4 steps, 
we check that $v^+$ becomes a supersolution of \eqref{eq:prad} satisfying 
\begin{equation}\label{eq:v+ord}
	v^+(r,t) \geq k r^\lambda (1+r)^{\lambda'-\lambda}
	\quad 
	\mbox{ for }r\geq0, \; t\geq 0. 
\end{equation}

%%%%%%%%%%%%%%%%%%%%%%%%%%%%%%%%%%%%%%%
%%%%%%%%%%%%%%%%%%%%%%%%%%%%%%%%%%%%%%%
%%%%%%%%%%%%%%%%%%%%%%%%%%%%%%%%%%%%%%%
Step 1 (Ordering condition). 
%%%%%%%%%%%%%%%%%%%%%%%%%%%%%%%%%%%%%%%
%%%%%%%%%%%%%%%%%%%%%%%%%%%%%%%%%%%%%%%
%%%%%%%%%%%%%%%%%%%%%%%%%%%%%%%%%%%%%%%
We check \eqref{eq:v+ord}. 
For $0\leq r \leq \rho(t)$ and $t\geq0$, since $\rho\geq(1+r)/2$ 
and $\lambda'>\lambda$, we have 
\[
	v^+(r,t)=v_\inn^+(r,t)=  K\rho^{\lambda'-\lambda} r^\lambda
	\geq 
	K\left( \frac{1+r}{2} \right)^{\lambda'-\lambda} r^\lambda
	\geq  k r^\lambda (1+r)^{\lambda'-\lambda} 
\] 
if $K(1/2)^{\lambda'-\lambda}\geq k$. 
As for $r > \rho(t)$, since $\lambda' \rho/\lambda \geq 1$, we see that 
\[
	v^+(r,t)=v_\oute^+(r,t)
	= K\left( 1+\frac{\lambda'}{\lambda} 
	\right)^{-\lambda'} \left( r+ \frac{\lambda'}{\lambda} \rho \right)^{\lambda'-\lambda} 
	\left( r+ \frac{\lambda'}{\lambda} \rho \right)^\lambda
	\geq  k ( r+ 1)^{\lambda'-\lambda} r^\lambda 
\]
provided that 
$K( 1+(\lambda'/\lambda))^{-\lambda'} \geq k$. 
Then we can choose $K$ such that \eqref{eq:v+ord} holds. 
We note that $K$ is determined by $\lambda$, $\lambda'$ and $k$. 
For later use, we take $K$ such that 
\begin{equation}\label{eq:Kcon}
	K>\max\left\{ 1, 
	2^{\lambda'-\lambda} k, \left( 1+\frac{\lambda'}{\lambda} \right)^{\lambda'} k
	\right\}. 
\end{equation}

%%%%%%%%%%%%%%%%%%%%%%%%%%%%%%%%%%%%%%%
%%%%%%%%%%%%%%%%%%%%%%%%%%%%%%%%%%%%%%%
%%%%%%%%%%%%%%%%%%%%%%%%%%%%%%%%%%%%%%%
Step 2 (Matching condition). 
%%%%%%%%%%%%%%%%%%%%%%%%%%%%%%%%%%%%%%%
%%%%%%%%%%%%%%%%%%%%%%%%%%%%%%%%%%%%%%%
%%%%%%%%%%%%%%%%%%%%%%%%%%%%%%%%%%%%%%%
We need to check the validity of the appropriate matching condition 
$\partial_r v_\inn^+(\rho(t),t) \geq \partial_r v_\oute^+(\rho(t),t)$. 
By the choice of $R$ in \eqref{eq:Rtdef}, we have 
\[
\begin{aligned}
	&\partial_r v_\inn^+(\rho(t),t) - \partial_r v_\oute^+(\rho(t),t) 
	= K(1+At)^{\theta(\lambda'-\lambda)} \lambda \rho^{\lambda-1} 
	- K(R+1)^{-\lambda'} \lambda' \left( \rho+R(1+At)^\theta \right)^{\lambda'-1} \\
	&= 
	K(1+At)^{\theta(\lambda'-1)} 
	\left( \lambda - \frac{\lambda'}{R+1}\right) 
	= 
	K(1+At)^{\theta(\lambda'-1)} 
	\frac{\lambda^2}{\lambda+\lambda'} \geq 0 
\end{aligned}
\]
for $t\in [0,\infty)$. 
Then the matching condition is satisfied.

%%%%%%%%%%%%%%%%%%%%%%%%%%%%%%%%%%%%%%%
%%%%%%%%%%%%%%%%%%%%%%%%%%%%%%%%%%%%%%%
%%%%%%%%%%%%%%%%%%%%%%%%%%%%%%%%%%%%%%%
Step 3 (Inner part). 
%%%%%%%%%%%%%%%%%%%%%%%%%%%%%%%%%%%%%%%
%%%%%%%%%%%%%%%%%%%%%%%%%%%%%%%%%%%%%%%
%%%%%%%%%%%%%%%%%%%%%%%%%%%%%%%%%%%%%%%
We show that $v_\inn^+$ is a supersolution for $r\leq \rho(t)$. 
Recall the following formula: 
\[
	\Delta_p f(r)= r^{1-n} (r^{n-1} |f'(r)|^{p-2} f'(r))', \quad r=|y|. 
\]
Then, 
\[
\begin{aligned}
	&\partial_t v_\inn^+ -\Delta_p v_\inn^+ -\xi'(t) \cdot \nabla v_\inn^+ \\
	&= K\theta (\lambda'-\lambda) A (1+At)^{\theta(\lambda'-\lambda)-1} r^\lambda \\ 
	&\quad 
	+K^{p-1}\lambda^{p-1} ((1-\lambda)(p-1)-(n-1)) (1+At)^{\theta(\lambda'-\lambda)(p-1)} 
	r^{-1-(1-\lambda)(p-1)} \\
	&\quad 
	-K\xi'(t) \cdot (y/|y|) 
	(1+At)^{\theta(\lambda'-\lambda)} \lambda r^{\lambda-1}. 
\end{aligned}
\]
By \eqref{eq:lamcon}, we have $(1-\lambda)(p-1)-(n-1)>0$. 
This together with 
$\lambda'>\lambda$, $K^{p-1}>K$ and \eqref{eq:assum} shows that 
there exists $C=C(n,p,\lambda,\lambda')>1$ such that 
\[
\begin{aligned}
	&\partial_t v_\inn^+ -\Delta_p v_\inn^+ -\xi'(t) \cdot \nabla v_\inn^+ \\
	&\geq 
	K\lambda^{p-1} ((1-\lambda)(p-1)-(n-1)) (1+At)^{\theta(\lambda'-\lambda)(p-1)} 
	r^{-(1-\lambda)(p-1)-1} \\
	&\quad 
	-K\xi'(t) \cdot (y/|y|) 
	(1+At)^{\theta(\lambda'-\lambda)} \lambda r^{\lambda-1} \\
	&\geq 
	K C^{-1} (1+At)^{\theta(\lambda'-\lambda)(p-1)} 
	r^{-(1-\lambda)(p-1)-1} 
	- K C C_\xi  (1+At)^{\theta(\lambda'-\lambda) -1}  r^{\lambda-1} \\
	&= 
	K (1+At)^{\theta(\lambda'-\lambda)(p-1)} r^{-(1-\lambda)(p-1)-1} 
	\left(  C^{-1} 
	- C C_\xi  (1+At)^{-1 - \theta(\lambda'-\lambda)(p-2)} 
	r^{\lambda + (1-\lambda)(p-1)} 
	\right). 
\end{aligned}
\]
For $r\leq \rho(t)=(1+At)^\theta$, the choice of $\theta$ in \eqref{eq:Rtdef} gives 
\[
\begin{aligned}
	&C^{-1} - C C_\xi  (1+At)^{-1 - \theta(\lambda'-\lambda)(p-2)} 
	r^{\lambda + (1-\lambda)(p-1)} \\
	&\geq 
	C^{-1} - C C_\xi  (1+At)^{-1 - \theta(\lambda'-\lambda)(p-2)
	+ \theta (\lambda + (1-\lambda)(p-1))} \\
	&= 
	C^{-1} - C C_\xi  (1+At)^{-1 + \theta( p-1-(p-2)\lambda' )}
	= C^{-1} - C C_\xi. 
\end{aligned}
\]
Hence there exists $0<C_\xi<1$ depending only on $n$, $p$, $\lambda$, and $\lambda'$ 
such that $v_\inn^+$ is a supersolution for $r\leq \rho(t)$ and $t\geq 0$. 

%%%%%%%%%%%%%%%%%%%%%%%%%%%%%%%%%%%%%%%
%%%%%%%%%%%%%%%%%%%%%%%%%%%%%%%%%%%%%%%
%%%%%%%%%%%%%%%%%%%%%%%%%%%%%%%%%%%%%%%
Step 4 (Outer part). 
%%%%%%%%%%%%%%%%%%%%%%%%%%%%%%%%%%%%%%%
%%%%%%%%%%%%%%%%%%%%%%%%%%%%%%%%%%%%%%%
%%%%%%%%%%%%%%%%%%%%%%%%%%%%%%%%%%%%%%%
It remains to prove that $v_\oute^+$ is a supersolution for $r> \rho(t)$. 
Direct computations show that 
\[
\begin{aligned}
	&\partial_t v_\oute^+ -\Delta_p v_\oute^+ -\xi'(t) \cdot \nabla v_\oute^+ \\
	&= K A \lambda' R \theta (R+1)^{-\lambda'} (1+At)^{\theta-1} 
	\left( r+R(1+At)^{\theta} \right)^{\lambda'-1} \\
	&\quad 
	+ K^{p-1} (\lambda')^{p-1} (R+1)^{-\lambda'(p-1)} 
	\left( r+R(1+At)^\theta \right)^{(\lambda'-1)(p-1)-1} \\
	&\qquad \times 
	\left[ (1-\lambda')(p-1)  - (n-1)r^{-1} (r+R(1+At)^\theta) \right] \\
	&\quad 
	-K \lambda' \xi'(t) \cdot (y/|y|) (R+1)^{-\lambda'}  
	\left( r+R(1+At)^\theta \right)^{\lambda'-1}. 
\end{aligned}
\]
Then by \eqref{eq:assum}, $1+At\geq 1$ and $r> \rho(t)=(1+At)^\theta$, 
there exists $C=C(n,p,\lambda',R,\theta)>1$ such that 
\[
\begin{aligned}
	&\partial_t v_\oute^+ -\Delta_p v_\oute^+ -\xi'(t) \cdot \nabla v_\oute^+ \\
	&\geq  
	K (1+At)^{\theta-1} 
	\left( r+R(1+At)^{\theta} \right)^{\lambda'-1}  
	\left[ 
	C^{-1} A - C C_\xi (1+At)^{-\theta}
	\right] \\
	&\quad 
	- K^{p-1} C  
	\left( r+R(1+At)^\theta \right)^{(\lambda'-1)(p-1)-1} 
	\left[ 1 + \frac{R(1+At)^\theta}{r} \right] \\
	&\geq  
	K (1+At)^{\theta-1} 
	\left( r+R(1+At)^{\theta} \right)^{\lambda'-1}  
	\left[ 
	C^{-1}A - C C_\xi 
	\right] 
	- K^{p-1} C  
	\left( r+R(1+At)^\theta \right)^{(\lambda'-1)(p-1)-1} 
\end{aligned}
\]
Since $A>1$, 
we can choose $0<C_\xi<1$ depending only on 
$n$, $p$, $\lambda'$, $R$ and $\theta$ so small that 
$2^{-1} C^{-1}A \geq 2^{-1} C^{-1} \geq C_\xi C$. 
Then by the choice of $\theta>0$ in \eqref{eq:Rtdef}, we see that  
\[
\begin{aligned}
	&\partial_t v_\oute^+ -\Delta_p v_\oute^+ -\xi'(t) \cdot \nabla v_\oute^+ \\
	&\geq 
	\frac{KA}{2C} (1+At)^{\theta-1} 
	\left( r+R(1+At)^{\theta} \right)^{\lambda'-1}  
	- K^{p-1} C  
	\left( r+R(1+At)^\theta \right)^{(\lambda'-1)(p-1)-1} \\
	&=   
	\frac{(1+At)^{\theta-1}}{ ( r+R(1+At)^{\theta} )^{1-\lambda'} }
	\left( 
	\frac{KA}{2C} 
	- K^{p-1} C  (1+At)^{1-\theta} 
	\left( r+R(1+At)^\theta \right)^{-\frac{1}{\theta}}
	\right) \\
	&\geq 
	\frac{(1+At)^{\theta-1}}{ ( r+R(1+At)^{\theta} )^{1-\lambda'} }
	\left( 
	\frac{KA}{2C} 
	- K^{p-1} C R^{-\frac{1}{\theta}} (1+At)^{-\theta} 
	\right) \\
	&\geq 
	\frac{(1+At)^{\theta-1}}{ ( r+R(1+At)^{\theta} )^{1-\lambda'} }
	\left( 
	\frac{KA}{2C} 
	- K^{p-1} C R^{-\frac{1}{\theta}}  
	\right)
\end{aligned}
\]
for $r> \rho(t)$ and $t\geq0$. 
Then there exists $A>1$ depending only 
on $n$, $p$, $\lambda'$, $K$, $R$ and $\theta$ such that 
$v_\oute^+$ is a supersolution for $r> \rho(t)$ and $t\geq 0$.  
By \eqref{eq:Rtdef}, 
we note that $C_\xi$ (resp. $A$) can be determined by 
$n$, $p$, $\lambda$ and $\lambda'$ (resp. $n$, $p$, $\lambda$, $\lambda'$ and $K$). 
Now we fix $C_\xi$. 
On the other hand, we take $A$ large again in the construction of a subsolution.

%%%%%%%%%%%%%%%%%%%%%%%%%%%%%%%%%%%%%%%
%%%%%%%%%%%%%%%%%%%%%%%%%%%%%%%%%%%%%%%
%%%%%%%%%%%%%%%%%%%%%%%%%%%%%%%%%%%%%%%
\subsection{Subsolution}
%%%%%%%%%%%%%%%%%%%%%%%%%%%%%%%%%%%%%%%
%%%%%%%%%%%%%%%%%%%%%%%%%%%%%%%%%%%%%%%
%%%%%%%%%%%%%%%%%%%%%%%%%%%%%%%%%%%%%%%
Let $0<\sigma<1$ be a small constant specified later.
We construct a subsolution of the form
\[
\left\{
\begin{aligned}
	&v_\inn^-(r,t):= K^{-1} (1+At)^{-\frac{1}{p-2}} r^{\lambda'}, \\
	&v_\oute^-(r,t):= K^{-1} \sigma^{\lambda'} (\sigma-\delta)^{-\lambda} 
	(1+At)^{-\frac{1}{p-2}} (r-\delta)^\lambda, 
\end{aligned}
\right.
\]
where $\lambda$ and $\lambda'$ satisfy \eqref{eq:lamcon}, 
$K$ satisfies \eqref{eq:Kcon} and $\delta$ is given by 
\begin{equation}\label{eq:delta_con}
	\delta:= \frac{2\lambda'-\lambda}{2\lambda'} \sigma < \sigma.
\end{equation}
Remark that  $K$ and $A$ will be chosen sufficiently large and 
$C_\xi$ is already fixed in the construction of the supersolution. 
We also remark that $r=\sigma$ is an intersection point 
of $v_\inn^-$ and $v_\oute^-$. Define 
\[
	v^-(r,t):=
	\left\{ 
	\begin{aligned}
	&v_\inn^-(r,t) && \mbox{ for }0\leq r\leq \sigma, \\
	&v_\oute^-(r,t) && \mbox{ for }r> \sigma.
	\end{aligned}
	\right.
\]
Here and below, we write $v^-(y,t)=v^-(r,t)$ by abuse of notation.  
Similarly to the construction of the supersolution, 
we check that $v^-$ becomes a subsolution of \eqref{eq:prad} satisfying 
\begin{equation}\label{eq:v-ord}
	0\leq v^-(r,t) \leq k' r^{\lambda'}  (1+r)^{\lambda-\lambda'}
	\quad 
	\mbox{ for }r\geq0, \; t\geq0. 
\end{equation}

%%%%%%%%%%%%%%%%%%%%%%%%%%%%%%%%%%%%%%%
%%%%%%%%%%%%%%%%%%%%%%%%%%%%%%%%%%%%%%%
%%%%%%%%%%%%%%%%%%%%%%%%%%%%%%%%%%%%%%%
Step 1 (Ordering condition). 
%%%%%%%%%%%%%%%%%%%%%%%%%%%%%%%%%%%%%%%
%%%%%%%%%%%%%%%%%%%%%%%%%%%%%%%%%%%%%%%
%%%%%%%%%%%%%%%%%%%%%%%%%%%%%%%%%%%%%%%
We check \eqref{eq:v-ord}. Remark that $\lambda < \lambda'$ by \eqref{eq:lamcon}. 
For $0\leq r <\sigma\, (<1)$ and $t\geq 0$, we have 
\[
	v^-(r,t)=v_\inn^-(r,t)= K^{-1} (1+At)^{-\frac{1}{p-2}} r^{\lambda'} 
	\leq  K^{-1} r^{\lambda'} 
	\leq k' 2^{\lambda-\lambda'} r^{\lambda'}
	\leq k' (1+r)^{\lambda-\lambda'} r^{\lambda'}
\] 
if $K^{-1}<2^{\lambda -\lambda'} k'$. 
For $\sigma\leq r\leq 1$, by \eqref{eq:delta_con} and $\sigma\leq r$, we see that  
\[
\begin{aligned}
	&v^-(r,t)=v_\oute^-(r,t)= 
	K^{-1} \sigma^{\lambda'-\lambda} 
	\left( \frac{\lambda}{2\lambda'}\right)^{-\lambda} 
	(1+At)^{-\frac{1}{p-2}} (r-\delta)^\lambda \\
	&\leq 
	K^{-1} r^{\lambda'} 
	\left( \frac{\lambda}{2\lambda'}\right)^{-\lambda} 
	\leq
	k'2^{\lambda-\lambda'} r^{\lambda'} 
	\leq 
	k'(1+r)^{\lambda-\lambda'} r^{\lambda'}
\end{aligned}
\]
provided that $K$ satisfies 
$K^{-1} (2\lambda'/\lambda)^\lambda 
\leq k'2^{\lambda-\lambda'}$. 
As for $r > 1$, since $\sigma<1$, we have 
\[
\begin{aligned}
	&v^-(r,t)=v_\oute^-(r,t)\leq 
	K^{-1} \left( \frac{\lambda}{2\lambda'}\right)^{-\lambda} 
	r^\lambda 
	= 
	K^{-1} \left( \frac{\lambda}{2\lambda'}\right)^{-\lambda} 
	r^{\lambda-\lambda'} r^{\lambda'} \\
	&\leq 
	K^{-1} \left( \frac{\lambda}{2\lambda'}\right)^{-\lambda} 
	(1+r)^{\lambda-\lambda'} r^{\lambda'} 
	\leq 
	k' (1+r)^{\lambda-\lambda'} r^{\lambda'} 
\end{aligned}
\]
also provided that $K$ satisfies 
$K^{-1} ( 2\lambda'/\lambda)^\lambda\leq k'$. 
Hence \eqref{eq:v-ord} holds if $K$ is sufficiently large. 
More precisely, taking \eqref{eq:Kcon} into account, 
now we fix $K$ so large that 
\begin{equation}\label{eq:Kcon2}
	K>
	\max\left\{ 
	1, 2^{\lambda'-\lambda} k, \left( 1+\frac{\lambda'}{\lambda} \right)^{\lambda'} k, 
	\frac{2^{\lambda' -\lambda}}{k'}, 
	\left( \frac{2\lambda'}{\lambda} \right)^\lambda \frac{2^{\lambda'-\lambda}}{k'}, 
	\right\}. 
\end{equation}
Recall that the supersolution $v^+$ satisfies \eqref{eq:v+ord}. 
Thus, $v^-$ and $v^+$ also satisfy 
the desired ordering condition \eqref{eq:order}.

%%%%%%%%%%%%%%%%%%%%%%%%%%%%%%%%%%%%%%%
%%%%%%%%%%%%%%%%%%%%%%%%%%%%%%%%%%%%%%%
%%%%%%%%%%%%%%%%%%%%%%%%%%%%%%%%%%%%%%%
Step 2 (Matching condition). 
%%%%%%%%%%%%%%%%%%%%%%%%%%%%%%%%%%%%%%%
%%%%%%%%%%%%%%%%%%%%%%%%%%%%%%%%%%%%%%%
%%%%%%%%%%%%%%%%%%%%%%%%%%%%%%%%%%%%%%%
We need to check the validity of the appropriate matching condition 
We check the matching condition
$\partial_r v_\inn^-(\sigma,t) \leq \partial_r v_\oute^-(\sigma,t)$. 
By \eqref{eq:delta_con}, we have 
\[
\begin{aligned}
	&\partial_r v_\inn^-(\sigma,t) - \partial_r v_\oute^-(\sigma,t) \\
	&= K^{-1}\lambda'(1+At)^{-\frac{1}{p-2}} \sigma^{\lambda'-1} 
	- K^{-1}\lambda \sigma^{\lambda'} 
	(\sigma-\delta)^{-\lambda} (1+At)^{-\frac{1}{p-2}} (\sigma-\delta)^{\lambda-1} \\
	&= K^{-1}(1+At)^{-\frac{1}{p-2}} (\sigma-\delta)^{-1} \sigma^{\lambda'-1}
	\left[\lambda' (\sigma-\delta) - \lambda \sigma \right] \leq 0
\end{aligned}
\]
for $t\geq0$. 
Then the matching condition is satisfied.

%%%%%%%%%%%%%%%%%%%%%%%%%%%%%%%%%%%%%%%
%%%%%%%%%%%%%%%%%%%%%%%%%%%%%%%%%%%%%%%
%%%%%%%%%%%%%%%%%%%%%%%%%%%%%%%%%%%%%%%
Step 3 (Inner part). 
%%%%%%%%%%%%%%%%%%%%%%%%%%%%%%%%%%%%%%%
%%%%%%%%%%%%%%%%%%%%%%%%%%%%%%%%%%%%%%%
%%%%%%%%%%%%%%%%%%%%%%%%%%%%%%%%%%%%%%%
We show that $v_\inn^-$ is a subsolution for $r\leq\sigma$. 
Then, 
\[
\begin{aligned}
	&\partial_t v_\inn^- -\Delta_p v_\inn^- -\xi'(t) \cdot \nabla v_\inn^- \\
	&= -\frac{A}{p-2}K^{-1}(1+At)^{-\frac{1}{p-2}-1} r^{\lambda'} \\ 
	&\quad 
	- K^{-(p-1)}(1+At)^{-\frac{p-1}{p-2}} 
	(\lambda')^{p-1} (n-1-(1-\lambda')(p-1))
	 r^{-1-(1-\lambda')(p-1)} \\
	&\quad 
	-K^{-1}\xi'(t) \cdot (y/|y|) 
	(1+At)^{-\frac{1}{p-2}} \lambda' r^{\lambda'-1}. 
\end{aligned}
\]
By \eqref{eq:lamcon}, we have $(n-1) - (1-\lambda')(p-1)>0$. 
Then by \eqref{eq:assum} and 
$(p-1) - \lambda' (p-2)>0$ due to \eqref{eq:lamcon}, 
there exists $C=C(n,p,\lambda',K)>1$ such that 
\[
\begin{aligned}
	&\partial_t v_\inn^- -\Delta_p v_\inn^- -\xi'(t) \cdot \nabla v_\inn^- \\
	&\leq  
	- K^{-(p-1)}(1+At)^{-\frac{p-1}{p-2}} 
	(\lambda')^{p-1} (n-1-(1-\lambda')(p-1))
	 r^{-1-(1-\lambda')(p-1)} \\
	&\quad 
	-K^{-1}\xi'(t) \cdot (y/|y|) 
	(1+At)^{-\frac{1}{p-2}} \lambda' r^{\lambda'-1} \\
	&\leq 
	- C^{-1} (1+At)^{-\frac{p-1}{p-2}} 
	 r^{-1-(1-\lambda')(p-1)} 
	+ CC_\xi  (1+At)^{-\frac{1}{p-2}-1}  r^{\lambda'-1} \\
	&=  
	- (1+At)^{-\frac{p-1}{p-2}} r^{-1-(1-\lambda')(p-1)} 
	\left( 
	C^{-1} 
	- CC_\xi  r^{(p-1) - \lambda' (p-2)}
	\right) \\
	&\leq 
	- (1+At)^{-\frac{p-1}{p-2}} r^{-1-(1-\lambda')(p-1)} 
	\left( 
	C^{-1} 
	- CC_\xi  \sigma^{(p-1) - \lambda' (p-2)}
	\right)
\end{aligned}
\]
for $r\leq \sigma$. 
We note that $C_\xi$ (resp. $K$) is already determined 
by $n$, $p$, $\lambda$ and $\lambda'$ 
(resp. $k$, $\lambda$ and $\lambda'$). 
Then we now fix $\sigma$ small depending only on 
$n$, $p$, $\lambda'$, $K$ and $C_\xi$ 
such that $v_\inn^-$ is a subsolution for $r\leq\sigma$ and $t\geq0$.

%%%%%%%%%%%%%%%%%%%%%%%%%%%%%%%%%%%%%%%
%%%%%%%%%%%%%%%%%%%%%%%%%%%%%%%%%%%%%%%
%%%%%%%%%%%%%%%%%%%%%%%%%%%%%%%%%%%%%%%
Step 4 (Outer part). 
%%%%%%%%%%%%%%%%%%%%%%%%%%%%%%%%%%%%%%%
%%%%%%%%%%%%%%%%%%%%%%%%%%%%%%%%%%%%%%%
%%%%%%%%%%%%%%%%%%%%%%%%%%%%%%%%%%%%%%%
We show that $v_\oute^-$ is a subsolution for $r>\sigma$. 
For simplicity, we write $b:=\sigma^{\lambda'} (\sigma-\delta)^{-\lambda}$. 
Then, 
\[
\begin{aligned}
	&\partial_t v_\oute^- -\Delta_p v_\oute^- -\xi'(t) \cdot \nabla v_\oute^- \\
	&= -\frac{bA}{p-2}K^{-1} (1+At)^{-\frac{1}{p-2}-1} (r-\delta)^\lambda \\ 
	&\quad 
	+ K^{-(p-1)} b^{p-1} (1+At)^{-\frac{p-1}{p-2}} \lambda^{p-1} 
	(r-\delta)^{(\lambda-1)(p-1)-1} 
	\left[(1-\lambda)(p-1)-(n-1)\frac{r-\delta}{r}\right] \\
	&\quad 
	-K^{-1} \xi'(t) \cdot (y/|y|) 
	b(1+At)^{-\frac{1}{p-2}} \lambda (r-\delta)^{\lambda-1}. 
\end{aligned}
\]
By \eqref{eq:assum} with $C_\xi<1$, 
there exists $C=C(n,p,\lambda,K,b)>1$ such that 
\[
\begin{aligned}
	&\partial_t v_\oute^- -\Delta_p v_\oute^- -\xi'(t) \cdot \nabla v_\oute^- \\
	&\leq 
	- C^{-1} A (1+At)^{-\frac{1}{p-2}-1} (r-\delta)^\lambda 
	+ C (1+At)^{-\frac{p-1}{p-2}} 
	(r-\delta)^{(\lambda-1)(p-1)-1} 
	\left(1+\frac{\delta}{r}\right) \\
	&\quad 
	+ C C_\xi (1+At)^{-\frac{1}{p-2}-1} (r-\delta)^{\lambda-1} \\
	&= 
	- (1+At)^{-\frac{p-1}{p-2}} (r-\delta)^\lambda 
	\left( 
	C^{-1} A 
	- C (r-\delta)^{(p-2)\lambda -p} 
	\left(1+\frac{\delta}{r}\right) 
	- C C_\xi (r-\delta)^{-1}
	\right). 
\end{aligned}
\]
Since $(p-2)\lambda -p<0$, we have 
\[
	(r-\delta)^{(p-2)\lambda -p} 
	\left(1+\frac{\delta}{r}\right)  
	+ C C_\xi (r-\delta)^{-1}
	\leq 
	(\sigma-\delta)^{(p-2)\lambda -p} 
	\left(1+\frac{\delta}{\sigma}\right)  
	+ C C_\xi (\sigma-\delta)^{-1}
\]
for $r>\sigma$. 
Hence there exists $C=C(n,p,\lambda,K,\sigma, \delta, b)>1$ such that 
\[
\begin{aligned}
	\partial_t v_\oute^- -\Delta_p v_\oute^- -\xi'(t) \cdot \nabla v_\oute^- 
	\leq 
	- (1+At)^{-\frac{p-1}{p-2}} (r-\delta)^\lambda 
	\left( 
	C^{-1} A - C  - C C_\xi 
	\right). 
\end{aligned}
\]
We note that $C(n,p,\lambda,K,\sigma, \delta, b)$ can be determined 
by $n$, $p$, $\lambda$, $\lambda'$, $\sigma$ and $K$. 
Hence we can choose $A$ large enough depending only on 
$n$, $p$, $\lambda$, $\lambda'$, $\sigma$ and $K$ 
such that $v_\oute^-$ is a subsolution for $r>\sigma$ 
and $t\geq0$.

%%%%%%%%%%%%%%%%%%%%%%%%%%%%%%%%%%%%%%%
%%%%%%%%%%%%%%%%%%%%%%%%%%%%%%%%%%%%%%%
%%%%%%%%%%%%%%%%%%%%%%%%%%%%%%%%%%%%%%%
\section{Existence}
%%%%%%%%%%%%%%%%%%%%%%%%%%%%%%%%%%%%%%%
%%%%%%%%%%%%%%%%%%%%%%%%%%%%%%%%%%%%%%%
%%%%%%%%%%%%%%%%%%%%%%%%%%%%%%%%%%%%%%%
We now argue for the existence of a solution of \eqref{eq:prad} 
and prove Theorem \ref{th:main}.

\begin{proof}[Proof of Theorem \ref{th:main}]
We divide the proof into 3 steps. 

%%%%%%%%%%%%%%%%%%%%%%%%%%%%%%%%%%%%%%%
%%%%%%%%%%%%%%%%%%%%%%%%%%%%%%%%%%%%%%%
%%%%%%%%%%%%%%%%%%%%%%%%%%%%%%%%%%%%%%%
Step 1 (Solution of the equation). 
%%%%%%%%%%%%%%%%%%%%%%%%%%%%%%%%%%%%%%%
%%%%%%%%%%%%%%%%%%%%%%%%%%%%%%%%%%%%%%%
%%%%%%%%%%%%%%%%%%%%%%%%%%%%%%%%%%%%%%%
We denote by $v^\pm$ the sub- and supersolution of \eqref{eq:prad} 
obtained in the previous section. For $i\geq 1$, we introduce
\[
	\Omega_i = B_{i+1}\setminus B_{\frac{1}{i+1}},
\]
the smooth functions $\eta_i$ by
\[
\left\{
\begin{aligned}
	&\eta_i (y)=1, && y\in \Omega_{i-1},\\
	&\eta_i(y)=0, && y\in \R^n\setminus \Omega_i,\\
	&0\leq \eta_i(y)\leq 1, && y\in \Omega_i\setminus \Omega_{i-1}, 
\end{aligned}
\right. 
\]
and finally 
\[
	v_{0,i}^\pm(y) :=v_0(y)\eta_i(y)+v^\pm (y,0)(1-\eta_i(y))
\]
for $i\geq 2$. We recall that $v^-(\cdot,0) \leq v_0\leq v^+(\cdot,0)$ 
by \eqref{eq:u0as} and \eqref{eq:order}. 
Then we can see the following conditions:
\begin{equation}\label{eq:relations}
\left\{
\begin{aligned}
	&v^\pm_{0,i}=v^\pm(\cdot,0) && \mbox{ in } \R^n\setminus \Omega_i,\\
	&v^\pm_{0,i+1}=v_0 && \mbox{ in } \Omega_i,\\
	&v^-(\cdot,0)\leq v^-_{0,i}\leq v^-_{0,i+1}=v_0 
	=v^+_{0,i+1}\leq v^+_{0,i}\leq v^+(\cdot,0), && \mbox{ in }\Omega_i.
\end{aligned}
\right. 
\end{equation}

We now consider $v_i^\pm=v_i^\pm(y,t)$ to be the solutions of 
\begin{equation}\label{eq:ieq}
\left\{ 
\begin{aligned}
	&\partial_t v_i^\pm= \Delta_p v_i^\pm  + \xi'(t) \cdot \nabla v_i^\pm  
	&&\mbox{ in } \Omega_i\times (0,i),\\
	&v_i^\pm =v^\pm && \mbox{ on }\partial \Omega_i \times (0,i),\\
	&v_i^\pm(\cdot,0) = v^\pm_{0,i} && \mbox{ in }\Omega_i.
\end{aligned}
\right. 
\end{equation}
The existence of a weak solution follows since $\xi$ is locally uniformly $C^1$ 
and since $v^\pm$ and $v_{0,i}^\pm$ are $C^1$ and $\partial \Omega_i$ is smooth, 
see \cite[Theorem 1.2, Section 1.3]{BDM}. 
From \cite[Theorem 1.2, page 42]{DiB}, 
it follows that $v_i^\pm\in C^{\alpha}(\Omega_i\times (0,i))
\cap C(\overline\Omega_i\times [0,i])$ 
for some $\alpha\in (0,1)$. 
We observe that $v^\pm$ are sub- and supersolutions of \eqref{eq:ieq}. 
In addition, due to \eqref{eq:relations}, $v^\pm_{i+1}$ 
are super- and subsolutions of \eqref{eq:ieq}. 
Therefore, the comparison principle implies that 
\[
	v^-\leq v^-_i\leq v^-_{i+1}\leq v^+_{i+1}\leq v^+_i\leq v^+
	\quad  \mbox{ on }\overline \Omega_i\times [0,i], 
\]
where this can be proved by choosing the test function 
$\phi = \max\{ w-\tilde w, 0\}$ 
in the weak formulation given a subsolution $w$ and a supersolution $\tilde w$. 
Hence, for each $j\geq i$, we have 
\begin{equation}\label{eq:jbound}
	v^-\leq v^-_i\leq v^-_j\leq v^+_j 
	\leq v^+_i \leq v^+ 
	\mbox{ on }\overline \Omega_i\times [0,i].
\end{equation}

The idea is now to study
\[
	\lim_{j\to \infty}v^-_j(y,t) \quad \mbox{ for }
	(x,t)\in \R^n\times [0,\infty)
\]
and verify that this is indeed a solution of \eqref{eq:main}. 
From \eqref{eq:jbound} together with interior estimates 
(see \cite[Theorem 1.2, page 42]{DiB}), 
it follows that for each $j\geq i$, there exists $\alpha \in (0,1)$ such that
\[
	\|v^-_{2j}\|_{L^\infty(\Omega_{2i}\times (0,2i)}
	+ \|v^-_{2j}\|_{C^{\alpha}(\Omega_{i}\times (0,i)}\leq C(i).
\]
By a standard diagonalization argument, 
we may extract a subsequence $v_{j'}$ and a limiting function $v$ such that 
$v_{j'}\to v
$ locally uniformly in $C^{\alpha}((\R^n\setminus \{0\})\times (0,\infty)$ 
and weakly in $L^p_{\loc}((0,\infty); W_\text{loc}^{1,p}(\R^n\setminus\{0\}))$. 
It is routine to verify that $v$ is a weak solution of 
$\partial_t v = \Delta_p v  + \xi'(t) \cdot \nabla v$ 
in $(\R^n\setminus\{0\})\times (0,\infty)$.

%%%%%%%%%%%%%%%%%%%%%%%%%%%%%%%%%%%%%%%
%%%%%%%%%%%%%%%%%%%%%%%%%%%%%%%%%%%%%%%
%%%%%%%%%%%%%%%%%%%%%%%%%%%%%%%%%%%%%%%
Step 2 (Continuity at $t=0$). 
%%%%%%%%%%%%%%%%%%%%%%%%%%%%%%%%%%%%%%%
%%%%%%%%%%%%%%%%%%%%%%%%%%%%%%%%%%%%%%%
%%%%%%%%%%%%%%%%%%%%%%%%%%%%%%%%%%%%%%%
From the fact that $v^\pm_{i+1}(\cdot,0)=v_0$ on $\Omega_i$ 
(see \eqref{eq:relations}), 
it follows that $v(\cdot,0) = v_0$ in $\R^n\setminus\{0\}$. 
We now verify that $v$ is continuous at $t=0$. 
Take $y_0\in \R^n\setminus \{0\}$. 
Then we find $i_0$ such that $y_0\in \Omega_{i_0}$. 
Then \eqref{eq:jbound} implies
\[
	v(y_0,t)-v_0(y_0)\leq v^+_{i_0}(y_0,t)-v_0(y_0).
\]
As for each fixed $y_0$ and $i_0$, the function $v^+_{i_0}(y_0,t)$ 
is continuous at $t=0$. 
This implies
\[
	\limsup_{t\to 0}v(y_0,t)-v_0(y_0)\leq 0.
\]
The bound from below follows in a similar way, instead using $v^-_{i_0}$. 
Hence, $t\mapsto v(y,t)$ is continuous outside the origin.

At the origin, the inequalities in \eqref{eq:jbound} imply that 
\[
	v^-\leq v\leq v^+ \quad \mbox{ in }(\R^n\setminus\{0\})\times [0,\infty). 
\]
Since both functions $v^\pm$ tend to zero as $y\to 0$ 
with a locally uniform (in $t$) decay rate dictated 
by $\lambda$ and $\lambda'$ respectively, 
this implies that $v(y,t)$ is locally uniformly (in $t$) continuous at $y=0$. 
The above now implies that $v(y,t)\to v_0(y,0)$ as $t\to 0$ locally uniformly 
in $y\in\R^n$. 
In addition, we can see that $u$ belongs to $C(\R^n\times[0,\infty))$.

%%%%%%%%%%%%%%%%%%%%%%%%%%%%%%%%%%%%%%%
%%%%%%%%%%%%%%%%%%%%%%%%%%%%%%%%%%%%%%%
%%%%%%%%%%%%%%%%%%%%%%%%%%%%%%%%%%%%%%%
Step 3 (Going back to $u$). 
%%%%%%%%%%%%%%%%%%%%%%%%%%%%%%%%%%%%%%%
%%%%%%%%%%%%%%%%%%%%%%%%%%%%%%%%%%%%%%%
%%%%%%%%%%%%%%%%%%%%%%%%%%%%%%%%%%%%%%%
We may now define $u(x,t)=v(x+\xi(t),t)$. 
It is plain to deduce that $u$ is a weak solution of \eqref{eq:main}
and that for each $R >0$, $u$ enjoys the bound
\begin{equation}\label{eq:uplownear}
	C^-(R)|x-\xi(t)|^{\lambda'} \leq u(x,t)\leq C^+(R)|x-\xi(t)|^\lambda
\end{equation}
for $t\in (0,R)$ as $x\to \xi(t)$. 
Moreover, by \cite{W} (see also  \cite{DF}), 
$\nabla u$ exists and is locally H\"older continuous 
in $\{(x,t); x\in \R^n\setminus \{\xi(t)\}, t\in (0,\infty)\}$. 
In particular, the first inequality in \eqref{eq:uplownear} 
and $\lambda'<1$ in \eqref{eq:lamcon} imply that 
\[
	\lim_{x\to \xi(t)}|\nabla u(x,t)|=\infty.
\]
Hence we have proved the desired conditions (i), (ii) and (iii). 
The proof is complete. 
\end{proof}

\section*{Acknowledgments}
The authors wish to express their sincere gratitude 
to Prof.~Marek Fila for leading us to the problem 
concerning the existence of moving gradient singularities 
in the evolutionary $p$-Laplace equation. 
They also wish to thank Dr. Petra Mackov\'a 
for fruitful discussions. 

The first author has been supported by the Swedish Research Council, 
grant no.~2023-03471. The second author has been supported 
by JSPS KAKENHI, grants nos. 22H01131, 22KK0035 and 23K12998.

Part of this material is based upon work supported by the Swedish Research Council under grant no.~2016-06596 while the first author and Marek Fila were participating in the research program ``Geometric Aspects of Nonlinear Partial Differential Equations'', at Institut Mittag-Leffler in Djursholm, Sweden, during the fall of 2022.
\section*{Declaration}
The authors declare that they have no known competing 
financial interests or personal relationships 
that could have appeared to influence the work reported in this paper,
and that the paper is not currently submitted to other journals and that 
it will not be submitted to other journals during the reviewing process.

\end{document}